\documentclass[11pt]{amsart}
\usepackage{geometry}                
\usepackage{graphicx}
\usepackage{amssymb}
\usepackage{epstopdf}
\usepackage{amsmath}
\usepackage{amsfonts}
\usepackage{amssymb}
\usepackage{amsthm}

\usepackage[parfill]{parskip}

\usepackage{mathrsfs}
\usepackage{latexsym}
\usepackage{longtable}    
\usepackage{graphpap}     
\DeclareMathAlphabet{\mathpzc}{OT1}{pzc}{m}{it}

\newtheorem{thm}{Theorem}[section]

\newtheorem{cor}[thm]{Corollary}

\theoremstyle{definition}
\newtheorem{defin}[thm]{Definition}
\newtheorem{ex}[thm]{Example}
\theoremstyle{remark}
\newtheorem{rem}[thm]{Remark}
\numberwithin{equation}{section}
\numberwithin{thm}{section}

\def \R {\mathbb R}
\def \Z {\mathbb Z}
\def\Q {\mathbb Q} 
\def\N {\mathbb N}

\newcommand{\kf}{\mathbb{K}}
\newcommand{\D}{\mathpzc{D}}

\newcommand{\lid}{\lambda I - D}
\newcommand{\eo}{E_{\omega}}
\newcommand{\se}{\sigma_{e}}
\newcommand{\spp}{\sigma_{p}}
\newcommand{\sep}{\sigma_{e}^{\prime}}
\newcommand{\sepp}{\sigma_{e}^{\prime\prime}}

\title[Spectral Analysis for a class of Unbounded Linear Operators]
{Spectral and Essential Spectral Analysis of Finite-Rank Perturbations of Unbounded Diagonal Operators on Non-Archimedean Hilbert Spaces}
\author{Teylama Herve Miabey}

\begin{document}
\maketitle
\begin{abstract}
\thispagestyle{plain}
\setcounter{page}{1}       
\addcontentsline{toc}{chapter}{abstract}
We study the spectral properties of a class of unbounded linear operators on a non-Archimedean Hilbert space $E_{\omega}$. More precisely, we consider operators of the form
\[
T=D+F,\qquad 
F=\sum_{j=1}^{m} u_j\otimes v_j,
\]
where $D$ is an unbounded diagonal operator and $F$ is a finite-rank perturbation. This work extends the spectral analysis of Diagana and McNeal for rank-one perturbations of diagonal operators to the case of arbitrary finite-rank perturbations. The main objective is to describe the spectrum, point spectrum, and essential spectrum of such operators in terms of the diagonal sequence associated with $D$ and the Fredholm properties of $\lambda I-T$. The theory of Fredholm operators plays a central role, particularly in the computation of the essential spectrum and in the study of stability under finite-rank perturbations.
\end{abstract}
\section{Introduction and Background}

Spectral theory over non-Archimedean fields has developed as an important counterpart to the classical spectral theory of operators on complex Banach and Hilbert spaces. In the non-Archimedean setting, the underlying scalar field is equipped with an absolute value satisfying the strong triangle inequality, and this changes many of the geometric and analytic properties of the corresponding normed spaces. The present paper is concerned with the spectral analysis of a class of unbounded diagonal operators subject to finite-rank perturbations in a non-Archimedean Hilbert space.

Let $\kf$ be a field. An absolute value on $\kf$ is a function
\[
|\cdot|:\kf\longrightarrow [0,\infty)
\]
such that, for all $x,y\in \kf$,
\[
|x|=0 \Longleftrightarrow x=0,\qquad |xy|=|x||y|,
\]
and
\[
|x+y|\leq |x|+|y|.
\]
The absolute value is called non-Archimedean if it satisfies the stronger inequality
\[
|x+y|\leq \max\{|x|,|y|\}.
\]
A field $\kf$ endowed with such an absolute value is called a non-Archimedean valued field. Throughout this work, the scalar field is assumed to be a complete non-Archimedean valued field whenever completeness is needed.

The non-Archimedean inequality implies several geometric properties that differ sharply from the classical Archimedean case. For example, if $|x|\neq |y|$, then
\[
|x+y|=\max\{|x|,|y|\}.
\]
Consequently, in a non-Archimedean metric space, triangles are isosceles in the sense that among the three distances determined by any three points, the maximum is attained at least twice. If the metric is induced by the absolute value,
\[
d(x,y)=|x-y|,
\]
then open and closed balls have unusual properties: every point of a ball is a center of the ball, balls are both open and closed, and two balls are either disjoint or one contains the other. These facts are fundamental in non-Archimedean analysis and are used repeatedly in the study of convergence, continuity, and boundedness.

A sequence $(x_n)$ in a non-Archimedean valued field is Cauchy if and only if
\[
|x_{n+1}-x_n|\longrightarrow 0
\qquad \text{as } n\to\infty.
\]
In particular, if $\kf$ is complete, then a series $\sum_{n=0}^{\infty}x_n$ converges if and only if $x_n\to 0$. This criterion is one of the most useful consequences of the ultrametric inequality and is frequently used in the construction and analysis of non-Archimedean Banach spaces.

A basic example of a non-Archimedean field is the field of $p$-adic numbers. Let $p$ be a prime number. The $p$-adic valuation $V_p$ on $\Q$ is defined by writing every nonzero rational number $x$ in the form
\[
x=p^{V_p(x)}\frac{a}{b},
\]
where $a,b\in \Z$ are relatively prime to $p$. The associated $p$-adic absolute value is
\[
|x|_p=p^{-V_p(x)},\qquad x\neq 0,
\]
and $|0|_p=0$. This absolute value satisfies
\[
|x+y|_p\leq \max\{|x|_p,|y|_p\},
\]
so $(\Q,|\cdot|_p)$ is a non-Archimedean valued field. Since $\Q$ is not complete with respect to $|\cdot|_p$, its completion is denoted by $\Q_p$ and is called the field of $p$-adic numbers. The field $\Q_p$ is a central example motivating the general theory developed in this paper.

More generally, valuations provide a natural way to construct non-Archimedean absolute values. A valuation on a field $\kf$ is a map
\[
V:\kf\longrightarrow \R\cup\{\infty\}
\]
such that
\[
V(x)=\infty \Longleftrightarrow x=0,\qquad
V(xy)=V(x)+V(y),
\]
and
\[
V(x+y)\geq \min\{V(x),V(y)\}.
\]
If $c>1$, then
\[
|x|=c^{-V(x)}
\]
defines a non-Archimedean absolute value on $\kf$. A valuation is called discrete if its value group is isomorphic to $\Z$. In the discrete case, one obtains valuation rings and maximal ideals analogous to those appearing in the construction of $\Q_p$.

Finite-dimensional non-Archimedean vector spaces also play an important role. For a non-Archimedean field $\kf$, the vector space $\kf^t$ is defined by
\[
\kf^t=\{x=(x_1,\ldots,x_t):x_i\in \kf\}.
\]
It is equipped with the norm
\[
\|x\|_t=\max_{1\leq i\leq t}|x_i|.
\]
This norm is non-Archimedean. One may also define the bilinear form
\[
\langle x,y\rangle_t=\sum_{i=1}^{t}x_i y_i,
\]
which satisfies the estimate
\[
|\langle x,y\rangle_t|\leq \|x\|_t\|y\|_t.
\]
The canonical vectors $e_1,\ldots,e_t$ form the standard orthonormal basis of $\kf^t$.

The infinite-dimensional setting considered in this paper is a non-Archimedean Hilbert space $E_{\omega}$, typically realized as a weighted sequence space over $\kf$. Its elements are sequences
\[
u=\sum_{i\geq 0} u_i e_i
\]
satisfying an appropriate convergence condition determined by a weight sequence $\omega=(\omega_i)_{i\geq 0}$. The family $(e_i)_{i\geq 0}$ serves as a distinguished orthonormal basis. Such spaces provide a natural framework for studying diagonal operators and their perturbations in the non-Archimedean setting; see, for example, \cite{TOK}, \cite{vR}, and \cite{DM}.

The principal operator studied in this paper is
\[
T=D+F,
\]
where $D$ is a diagonal operator and $F$ is a finite-rank operator of the form
\[
F=\sum_{j=1}^{m}u_j\otimes v_j.
\]
Here the rank-one operator $u\otimes v$ is defined by
\[
(u\otimes v)(x)=\langle x,v\rangle u.
\]
If
\[
D\left(\sum_{i\geq 0}x_i e_i\right)
=
\sum_{i\geq 0}\lambda_i x_i e_i,
\]
then the spectral properties of $D$ are closely related to the set
\[
\Lambda=\{\lambda_i:i\geq 0\}.
\]
When $D$ is unbounded, its domain is a proper dense subspace of $E_{\omega}$, and one must carefully distinguish between boundedness, closedness, invertibility, and Fredholm properties.

For a linear operator $A$ on a non-Archimedean Banach space, the resolvent set $\rho(A)$ consists of all $\lambda\in \kf$ such that
\[
\lambda I-A
\]
is bijective and has bounded inverse. The spectrum is
\[
\sigma(A)=\kf\setminus \rho(A).
\]
The point spectrum $\sigma_p(A)$ is the set of eigenvalues of $A$, namely those $\lambda\in \kf$ for which
\[
N(\lambda I-A)\neq \{0\}.
\]
The essential spectrum is described in terms of Fredholm theory.

Recall that an operator $A:X\to Y$ between non-Archimedean Banach spaces is called Fredholm if its kernel $N(A)$ is finite-dimensional, its range $R(A)$ is closed, and the quotient space $Y/R(A)$ is finite-dimensional. Its Fredholm index is
\[
\operatorname{ind}(A)=\dim N(A)-\operatorname{codim} R(A).
\]
The Fredholm framework is useful because essential spectra are often characterized by the failure of $\lambda I-A$ to be Fredholm, or to be Fredholm of index zero. In particular, for an operator $A$, one may define the essential spectrum by
\[
\sigma_e(A)
=
\{\lambda\in \kf: \lambda I-A \text{ is not Fredholm of index }0\}.
\]
This point of view is especially important for finite-rank perturbations, since Fredholm properties are frequently stable under compact or finite-rank perturbations; see \cite{A}, \cite{PE2}, and \cite{PE3}.

The present work extends the rank-one perturbation results of Diagana and McNeal \cite{DM}, who studied operators of the form
\[
A=D_{\lambda}+u\otimes v
\]
on a non-Archimedean Hilbert space. Their results describe the spectrum in terms of the diagonal sequence and additional eigenvalues generated by the rank-one perturbation. In this paper, we consider the more general finite-rank perturbation
\[
T=D+\sum_{j=1}^{m}u_j\otimes v_j.
\]
Our goal is to determine how the spectral and essential spectral properties of the unbounded diagonal operator $D$ change under such finite-rank perturbations. The main tools are the structure of non-Archimedean Hilbert spaces, the behavior of diagonal operators, and the Fredholm theory of non-Archimedean linear operators.

\section{Computation of the Essential Spectrum of $D$, $\se(D)$ where
$D$ is an unbounded linear operator}
\begin{thm}
Let $A : E \rightarrow F$ be a linear map from $E$ to $F$, where $E$ 
and $F$ are normed vector spaces over $\kf$, with $\kf$ a complete
non-Archimedian field such that $\{|\lambda|,\lambda \in \kf\}$ is
dense in $\R_{+}^{\ast}$. $A$ is continuous off there exists a constant
$M$ such that $||A(x)|| \le M||x||\, \forall x \in E$. 
\end{thm}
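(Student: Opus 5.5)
The statement reads: $A$ is continuous \emph{if and only if} there is a constant $M$ with $\|A(x)\|\le M\|x\|$ for all $x\in E$. The plan has two directions. The backward direction is immediate. The forward direction uses the hypothesis that $\{|\lambda|:\lambda\in\kf\}$ is dense in $\R_+^\ast$. That density is needed because we cannot simply rescale a vector to norm exactly $1$ as in the real or complex case.

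For the \emph{if} direction: given $\|Ax\|\le M\|x\|$, linearity gives
\[
\|Ax-Ay\|=\|A(x-y)\|\le M\|x-y\|,
\]
so $A$ is Lipschitz and hence continuous. If $M=0$ then $A=0$, which is trivially continuous.

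For the \emph{only if} direction, assume $A$ is continuous. Then $A$ is continuous at $0$, so there is $\delta>0$ with $\|x\|\le\delta\Rightarrow\|Ax\|\le 1$. Fix $x\neq 0$; we want to scale $x$ into the ball of radius $\delta$ without losing too much in the constant.
\begin{enumerate}
\item Fix an auxiliary ratio, say $2$.
\item By density of the value group in $\R_+^\ast$, choose $\lambda\in\kf$ with
\[
\frac{\delta}{2\|x\|}<|\lambda|\le\frac{\delta}{\|x\|}.
\]
This is the key step where the hypothesis enters: density guarantees the interval $\bigl(\delta/(2\|x\|),\,\delta/\|x\|\bigr]$ contains some $|\lambda|$.
\item Then $\|\lambda x\|=|\lambda|\|x\|\le\delta$, so $\|A(\lambda x)\|\le1$.
\item Hence $|\lambda|\,\|Ax\|\le 1$, which gives
\[
\|Ax\|\le\frac{1}{|\lambda|}<\frac{2}{\delta}\|x\|.
\]
\end{enumerate}
Taking $M=2/\delta$ completes this direction, and the case $x=0$ is trivial. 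Only the homogeneity $\|\lambda x\|=|\lambda|\|x\|$ of the norm is used, not the ultrametric inequality.

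The main obstacle is the scaling step (step 2). Without density, for example with a discrete valuation, one would instead pick a $\lambda$ with $0<|\lambda|<1$ and use powers $\lambda^n$ to land in an annulus $|\lambda|\delta<\|\lambda^n x\|\le\delta$. That alternative works in general and yields $M=1/(|\lambda|\delta)$, so the density hypothesis is convenient but not essential. I will present the density argument since the statement assumes it, and mention that the same bound follows via powers of a fixed $\lambda$.
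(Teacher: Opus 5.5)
Your proof is correct and follows essentially the same route as the paper. Both arguments use continuity at $0$ to get a ball on which $\|A\cdot\|$ is at most $1$, then use density of the value group to rescale $x$ into that ball. The paper uses two scalars $\alpha,\beta$ and lets $\varepsilon\to 0$ and $|\alpha|\to r$ to reach the sharp constant $M=1/r$; your single scalar with a factor-$2$ slack gives $M=2/\delta$, which is all the statement requires.
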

\begin{proof}
Suppose there exists such a constant $M \ge 0$ such that 
$||A(x)|| \le M||x||\, \forall x \in E$. Then 
\[
\begin{aligned}
A(x) - A(x^{\prime}) &= A(x - x^{\prime}) \\
\Rightarrow  ||A(x) - A(x^{\prime})|| & = ||A(x - x^{\prime})|| \\
\Rightarrow  ||A(x - x^{\prime})|| &\le M||x - x^{\prime}||.
\end{aligned}
\]
Hence $A$ is continuous.

Suppose now that $A$ is continuous. Let $U = \left\{ y \in F, 
||y|| < 1\right\}$. Then $U$ is open in$F$. Since $A$ is continuous, 
there exists an open set $V \in E$ such that $V = A^{-1}(U)$. If 
$0 \in U$ then $0 \in V$. There exists a positive real number 
$r$ such that $W = \{x \in E\,|\, ||x|| < r\}$ is contained in 
$V$. Let $x \in E$, $x \ne 0$, $\varepsilon > 0, \alpha \in \kf,
0 < |\alpha| < r$ and $\beta \in \kf$ such that $||x|| <
|\beta| < ||x||+\varepsilon$. Then 
$\displaystyle x^{\prime} = \frac{\alpha}{\beta}x \in W$, because
$\displaystyle \frac{|\alpha|}{|\beta|}||x||<|\alpha|<r$.

Since $A(W) \subseteq A(V) \subset U$, we have $||x^{\prime}|| < 1$. 
Since $\displaystyle A(x^{\prime}) = \frac{\alpha}{\beta}A(x)$, then
\[
||A(x)||\le \frac{|\beta|}{|\alpha|}\le \frac{||x||+\varepsilon}{
|\alpha|},
\]
which is true for all $\alpha \in \kf$ such that $0 < |\alpha| < r$. 
Since $\{|\lambda|, \lambda \in \kf \}$ is dense in $\R_{+}$, 
$\displaystyle ||A(x)|| \le \frac{||x||+ \varepsilon}{r}$. So the 
relation $\displaystyle ||A(x)|| \le \frac{||x||+ \varepsilon}{r}$
is true for every $\varepsilon > 0$. This implies the relation
$||A(x)|| < M||x||$ with $M = 1/r$.
\end{proof}
\begin{rem}
If the valuation on $\kf$ is not discrete, then 
$\{|\lambda|, \lambda \in \kf \}$ is dense in $\R_{+}$. If the 
valuation on $\kf$ is discrete, $\{|\lambda|, \lambda \in \kf \}$ is 
not dense in $\R_{+}$.
\end{rem}

In the case of an unbounded linear operator, we will assume the the
 evaluation of the field $\kf$ is not discrete. 

\begin{defin}
Let $\D(D)\subseteq \eo \rightarrow \eo$ be a diagonal operator, i.e
there exists $\lambda_{i} \in \kf, i \in \N$ such that $D(u) =\sum 
\lambda_{i}u_{i}e_{i}$, $u = \sum u_{i}e_{i}$ with $u \in \D(D)$, 
$D$ is an unbounded linear operator and $\kf$ is a non-Archimedian
value field. We have
\[
\D = 
\left\{u = \sum u_{i}e_{i}, \lim |\lambda_{n}||u_{n}||w_{n}|^{1/2} =0
\right\}
\], where $e_{i}\in  \D(D) \forall i \in \N \Rightarrow 
\overline{\D(D)} = \eo$. 
\end{defin}

\begin{defin}
Let $\rho(D)$ be the resolvent of the unbounded diagonal and linear
operator of $D$. $\lambda \in \kf$ is an element of $\rho(D)$ iff
$\lid$ is a bijection and $(\lid)^{-1}$ is bounded. The spectrum of 
$D$ denoted by $\sigma(D)$ is given by $\kf - \rho(D)$. 
\end{defin}

\begin{thm}
If $\D \ne \eo$, then $\sigma(D) = \kf$. 
\end{thm}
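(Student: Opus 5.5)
Here $\D$ is read as the domain $\D(D)$ of the diagonal operator, so the hypothesis $\D\neq\eo$ says that $D$ is not defined on all of $\eo$. The plan is to show that for every $\lambda\in\kf$ the operator $\lid$, regarded as a map on $\eo$, cannot be a bijection of $\eo$ onto $\eo$ with bounded inverse. Under the convention of the definition of $\rho(D)$, where $\lid$ must be a bijection and $\rho(D)$ is understood for an operator considered on all of $\eo$, this places every $\lambda$ in $\sigma(D)$.

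\emph{Step 1 (failure of surjectivity onto $\eo$ from the whole space).} Fix $\lambda\in\kf$. Since $\D\ne\eo$, the map $\lid$ is only defined on the proper subspace $\D$, so as an operator on $\eo$ it has no bijective realisation $\eo\to\eo$.

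\emph{Step 2 (the substantive version, where one only asks $\lid:\D\to\eo$ to be bijective with bounded inverse).} In this case Step 1 is not enough, and the plan is to show that boundedness of the inverse fails.
\begin{itemize}
\item Suppose $(\lid)^{-1}$ exists as a bounded operator $\eo\to\eo$. Since $(\lid)^{-1}$ is itself diagonal with entries $(\lambda-\lambda_i)^{-1}$, boundedness gives, by the first theorem of this section (continuity $\iff$ a bound $M$), a constant $M$ with $|\lambda-\lambda_i|^{-1}\le M$ for all $i$.
\item Unboundedness of $D$, which is exactly what $\D\neq\eo$ encodes, means $\sup_i|\lambda_i|=\infty$.
\item Take $u\in\eo\setminus\D$, so $|\lambda_n||u_n||\omega_n|^{1/2}\not\to0$. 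Then $v=(\lid)^{-1}u$ has coordinates $u_n/(\lambda-\lambda_n)$.
\item Lying in $\D$ requires $|\lambda_n||u_n||\omega_n|^{1/2}/|\lambda-\lambda_n|\to0$.
\item For large $|\lambda_n|$ one has $|\lambda-\lambda_n|=|\lambda_n|$ by the isosceles property. Hence that quantity equals $|u_n||\omega_n|^{1/2}$ along such $n$, which does tend to $0$ because $u\in\eo$. This gives no contradiction, so this version of Step 2 would not close as stated.
\end{itemize}

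\emph{Main obstacle.} The bounded-inverse route in Step 2 does not work, since $(\lid)^{-1}$ can map $\eo$ into $\D$. The argument therefore has to rest on Step 1, which depends on the convention that an element of $\rho(D)$ requires $\lid$ to be bijective on $\eo$ and not merely on $\D$. I would state this convention explicitly. The conclusion $\sigma(D)=\kf$ then follows immediately, because for every $\lambda\in\kf$ the operator $\lid$ is not even defined, let alone surjective from, the whole of $\eo$. The delicate point is therefore definitional rather than analytic, and I would flag that under the alternative convention (domain $\D$) the statement would need additional hypotheses.
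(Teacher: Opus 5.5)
Your Step~1 is logically fine, but it does not prove the paper's statement; it replaces it by a tautology. The paper takes $D$ as an operator $\D(D)\subseteq\eo\to\eo$, so $\lambda\in\rho(D)$ means that $\lid:\D(D)\to\eo$ is a bijection with bounded inverse. If you instead demand a bijection $\eo\to\eo$, every operator that is not everywhere defined has spectrum $\kf$ by definition. The paper's own argument, which reasons through the continuity of $(\lid)^{-1}$, would then be pointless. So the only substantive part of your proposal is Step~2, and your finding there, that nothing goes wrong, is correct.

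That finding is in fact decisive. Under the intended reading the statement is false, and your Step~2 locates the error in the paper's proof. The paper argues $\lambda\in\rho(D)\Rightarrow(\lid)^{-1}$ continuous $\Rightarrow\lid$ continuous $\Rightarrow D$ continuous, contradicting $\D\neq\eo$. The middle implication is false: a bounded inverse says nothing about boundedness of the operator itself, which is the normal situation for resolvents of unbounded operators.

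Your computation only treated large $|\lambda_n|$; here is how to complete it. Let $\delta=\inf_i|\lambda-\lambda_i|>0$, let $u\in\eo$, and set $v_n=u_n/(\lambda-\lambda_n)$. Then $|v_n||\omega_n|^{1/2}\le\delta^{-1}|u_n||\omega_n|^{1/2}$. Since $|\lambda_n|\le\max\{|\lambda|,|\lambda-\lambda_n|\}$,
\[
|\lambda_n||v_n||\omega_n|^{1/2}\le\max\{1,|\lambda|\delta^{-1}\}\,|u_n||\omega_n|^{1/2}\longrightarrow 0 .
\]
Hence $v\in\D(D)$, $(\lid)v=u$, $\lid$ is injective on $\D(D)$, and $\|(\lid)^{-1}\|\le\delta^{-1}$ for the usual norm $\|u\|=\sup_n|u_n||\omega_n|^{1/2}$. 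Your first bullet shows a bounded inverse forces $|\lambda-\lambda_i|\ge 1/M$ for all $i$, and $\spp(D)=\Lambda$. Together these give $\sigma(D)=\overline{\Lambda}$ in the unbounded case as well.

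For example, take $\lambda_i=a^i$ with $|a|>1$. Then $\Lambda$ is unbounded, so $\D\neq\eo$, yet $\delta\ge 1$ at $\lambda=0$, so $0\in\rho(D)$. Thus $\sigma(D)=\kf$ holds only when $\Lambda$ is dense in $\kf$. The right conclusion of your analysis is that the theorem is false as stated, not that it can be rescued by a convention that empties it of content.
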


\begin{proof}
Since $\D \ne \eo$, $D$ is not continuous ($D$ is not bounded in
$\D(D))$, otherwise $D$ can be extended to a continuous map on $\eo$. 
Since $\overline{\D(D)} = \eo$, we have $\D(D) = 
\overline{\D(D)} = \eo$, but by Theorem 6, we have that 
$\lambda \in \rho(D)$. $(\lid)^{-1}$ is continuous $\Rightarrow 
\lid$ is continuous $\Rightarrow D$ is continuous. Therefore
$\rho(D) = \phi$, which gives us $\sigma(D) = \kf$. 

$\lambda \in \kf$ is an eigenvalue off $N(\lid) \ne 0$. Given that
$D(u) = \sum \lambda_{i}u_{i}e_{i}$,$ u = \sum u_{i}e_{i}, u \ne 0$,
\[
\begin{aligned}
N(\lid) \ne 0 & \Leftrightarrow (\lid)(u) = 
\sum(\lambda - \lambda_{i})(u_{i})e_{i} = 0\\
& \Rightarrow (\lambda - \lambda_{i})(u_{i}) = 0 \quad \forall i \in \N.
\\
\end{aligned}
\]  
Since $u \ne 0$, $\exists k \in \N$ such that $u_{k}\ne 0$. Hence
$(\lambda - \lambda_{k})u_{k} = 0 \Rightarrow \lambda - \lambda_{k}= 0$.
Therefore $\lambda = \lambda_{k}\in \Lambda \Rightarrow \spp(D) 
\subseteq \Lambda$. 

Note that for all $\lambda_{k}$, $\lambda_{k}I -D)e_{k}
= \lambda_{k}e_{k}-\lambda_{k}e_{k} = 0 \Rightarrow \lambda_{k}
\in \spp(D)$. Then $\spp(D) \supseteq \Lambda$. Therefore $\spp(D) = 
\Lambda$.
Now recall that 

$\sep(D) = \{\lambda \in \kf\,|\,
(\lid)\textrm{ is injective but not surjective}\}$ and

$\sepp(D) = \{\lambda \in \kf\,|\,\lambda \textrm{ is of infinite 
multiplicity with respect to }D\}$. We conclude the following:
\[ 
\begin{aligned}
\sep(D) &= \kf - \Lambda \\
\sepp(D) &\subseteq \Lambda - \Lambda^{\ast}, \textrm{ where }
\Lambda^{\ast} = \{\lambda \in \Lambda : r_{\lambda}< \infty\}\\
\se(D) &= \sep(D) \cup \sepp(D).
\end{aligned}
\]
\end{proof}

We have derived the following results:
\begin{cor}
For the bounded case, we have the following:
\[
\begin{aligned}
\sigma(D) &= \overline{\Lambda}\\
\spp(D) &= \Lambda \\
\sep(D) &= \partial \Lambda = \overline{\Lambda} - \Lambda\\
\sepp(D) &= \Lambda - \Lambda^{\ast}\\
\se(D) &= \sep(D) \cup \sepp(D)\\
\therefore \se(D) &= (\overline{\Lambda} - \Lambda) \cup 
(\Lambda - \Lambda^{\ast})\\
&= \partial \Lambda \cup (\Lambda \setminus \Lambda^{\ast})\\
\end{aligned}
\]
\end{cor}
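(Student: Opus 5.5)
We work in the bounded case, where $\sup_i |\lambda_i|<\infty$, $\D(D)=\eo$, and $D$ is continuous by the first theorem of this section. We establish the five formulas in order; the last two are formal consequences of the definition $\se(D)=\sep(D)\cup\sepp(D)$.

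\textbf{Point spectrum.} The argument in the proof of the previous theorem already gives $\spp(D)=\Lambda$. If $(\lid)u=0$ with $u\ne 0$, pick $k$ with $u_k\neq 0$; then $\lambda=\lambda_k$. Conversely $(\lambda_k I-D)e_k=0$. Boundedness is not used here.

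\textbf{The spectrum $\sigma(D)=\overline{\Lambda}$.}
\begin{itemize}
\item For $\lambda\notin\overline{\Lambda}$ put $\delta=\inf_i|\lambda-\lambda_i|>0$. Define the candidate inverse $R(u)=\sum_i(\lambda-\lambda_i)^{-1}u_ie_i$. The bound
\[
|(\lambda-\lambda_i)^{-1}u_i|\,|\omega_i|^{1/2}\le \delta^{-1}|u_i|\,|\omega_i|^{1/2}\to 0
\]
shows that $R$ maps $\eo$ into $\eo$ with $\|R\|\le\delta^{-1}$. Clearly $R$ inverts $\lid$, so $\lambda\in\rho(D)$.
\item $\Lambda\subseteq\sigma(D)$ because $\Lambda=\spp(D)$.
\item Let $\lambda\in\overline{\Lambda}\setminus\Lambda$ and choose $\lambda_{i_k}\to\lambda$. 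Then $\|(\lid)e_{i_k}\|/\|e_{i_k}\|=|\lambda-\lambda_{i_k}|\to0$, so no bounded inverse can exist.
\end{itemize}
Hence $\sigma(D)=\overline{\Lambda}$.

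\textbf{The two essential parts.}
\begin{itemize}
\item $\sep(D)$: for $\lambda\in\overline{\Lambda}\setminus\Lambda$, the operator $\lid$ is injective by the point spectrum computation. It is not surjective: otherwise it would be a continuous bijection of the Banach space $\eo$, and its inverse would be bounded, contradicting $\lambda\in\sigma(D)$. (This uses the open mapping theorem over a complete non-Archimedean field, which I take as standard.) For $\lambda\in\Lambda$ the operator is not injective, and for $\lambda\notin\overline{\Lambda}$ it is bijective. So $\sep(D)=\overline{\Lambda}\setminus\Lambda$.
\item $\sepp(D)$: for $\lambda\in\Lambda$, the kernel $N(\lid)$ is the closed span of $\{e_i:\lambda_i=\lambda\}$, of dimension $r_\lambda=\#\{i:\lambda_i=\lambda\}$. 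Thus $\lambda$ has infinite multiplicity exactly when $r_\lambda=\infty$, that is, when $\lambda\in\Lambda\setminus\Lambda^{\ast}$.
\end{itemize}
Taking the union gives the stated $\se(D)$.

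\textbf{Main obstacle.} The delicate step is the surjectivity failure in $\sep(D)$. If one wants to avoid the open mapping theorem, one can instead construct an explicit vector outside the range. Take a subsequence with $|\lambda-\lambda_{i_k}|\to0$ and weights chosen so that $y=\sum_k c_k e_{i_k}\in\eo$ but $c_k/(\lambda-\lambda_{i_k})$ does not tend to $0$ in the weighted sense. I would try $|c_k||\omega_{i_k}|^{1/2}=|\lambda-\lambda_{i_k}|^{1/2}$, which uses the density of the value group assumed earlier.
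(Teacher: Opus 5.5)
Your proof is correct. It is also far more complete than what the paper offers for this statement. The only bounded-case computation the paper actually carries out is the kernel argument giving $\spp(D)=\Lambda$, which you reproduce. The rest of that proof block concerns the unbounded case: $\sigma(D)=\kf$ by discontinuity, $\sep(D)=\kf-\Lambda$, and only the one-sided inclusion $\sepp(D)\subseteq\Lambda-\Lambda^{\ast}$. The bounded-case formulas are then simply declared ``derived''.

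You supply the missing arguments:
\begin{itemize}
\item the diagonal resolvent with $\|R\|\le\delta^{-1}$ off $\overline{\Lambda}$;
\item the approximate eigenvectors $e_{i_k}$, which rule out a bounded inverse on $\overline{\Lambda}\setminus\Lambda$;
\item non-surjectivity there via the open mapping theorem, which is valid for Banach spaces over any complete non-trivially valued field, and $\eo$ is complete;
\item the identification of $N(\lid)$ with the closed span of $\{e_i:\lambda_i=\lambda\}$, which gives the equality $\sepp(D)=\Lambda\setminus\Lambda^{\ast}$ rather than the paper's inclusion.
\end{itemize}
The paper's route buys only brevity, in effect deferring to the known theory of bounded diagonal operators. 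Yours is self-contained.

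Two small remarks.

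First, in your fallback construction, density of the value group does not let you achieve $|c_k||\omega_{i_k}|^{1/2}=|\lambda-\lambda_{i_k}|^{1/2}$ exactly. It suffices to pick $c_k$ with this ratio trapped between two fixed positive constants. That is possible for any non-trivial absolute value, so no density hypothesis is needed; the paper imposed it only for the unbounded case anyway.

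Second, you were right to take $\se(D)=\sep(D)\cup\sepp(D)$ as a definition, as Section 2 does, and to read $\partial\Lambda$ as shorthand for $\overline{\Lambda}\setminus\Lambda$.
\begin{itemize}
\item Under the introduction's Fredholm-index-zero definition, the final formula fails. Take $\lambda_0=0$ and $\lambda_n=p^n$ for $n\ge 1$ over $\Q_p$. Then $0\in\Lambda^{\ast}$, yet $-D$ has non-closed range, so $0$ would lie in $\se(D)$, while the stated formula gives the empty set.
\item Read literally as a topological boundary, $\partial\Lambda$ equals all of $\overline{\Lambda}$. This is because countable subsets of a complete non-trivially valued field have empty interior, by Baire.
\end{itemize}
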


\begin{cor}
For the unbounded case, we have the following:
\[
\begin{aligned}
\sigma(D) &= \kf\\
\spp(D) &= \Lambda \\
\sep(D) &= \kf - \Lambda\\
\sepp(D) &= \Lambda - \Lambda^{\ast}\\
\se(D) &= \sep(D) \cup \sepp(D)\\
\therefore \se(D) &= (\kf - \Lambda) \cup 
(\Lambda - \Lambda^{\ast})\\
&= \partial \Lambda \cup (\Lambda \setminus \Lambda^{\ast})
\textrm{ where } 
\Lambda^{\ast} = \{\lambda \in \Lambda : r_{\lambda}< \infty\}
\\
\end{aligned}
\]
\end{cor}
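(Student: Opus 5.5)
We prove the corollary by assembling the pieces established in the proof of the preceding theorem, one line of the display at a time, under the standing hypothesis $\D(D)\neq \eo$ (the unbounded case).

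\emph{The spectrum.} The preceding theorem gives $\sigma(D)=\kf$ directly. Its argument runs as follows: if some $\lambda\in\rho(D)$ existed, then $(\lid)^{-1}$ would be bounded. By the first theorem of this section (continuity is equivalent to a bound $\|Ax\|\le M\|x\|$, using non-discreteness of the valuation), this would force $\lid$, and hence $D$, to be continuous on the dense subspace $\D(D)$. Then $D$ would extend continuously to all of $\eo$, which contradicts $\D(D)\neq\eo$.

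\emph{The point spectrum.} We repeat the coordinatewise computation. $(\lid)u=0$ means $(\lambda-\lambda_i)u_i=0$ for every $i$. A nonzero $u$ has some $u_k\neq 0$, which forces $\lambda=\lambda_k$. Conversely, each $e_k\in\D(D)$ is an eigenvector for $\lambda_k$. This gives $\spp(D)=\Lambda$.

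\emph{The set $\sep(D)$.} For $\lambda\notin\Lambda$, the operator $\lid$ is injective by the point-spectrum computation. It cannot be both surjective and boundedly invertible, since $\sigma(D)=\kf$. To show that it actually fails to be surjective, note that any preimage of $f=\sum f_ie_i$ must be $u_i=f_i/(\lambda-\lambda_i)$. Since $|\lambda_i|$ is unbounded, $|\lambda-\lambda_i|=|\lambda_i|$ along a subsequence, so the preimage exists in $\D(D)$ only when $\lim |f_i||w_i|^{1/2}\cdot|\lambda_i|/|\lambda-\lambda_i|=0$. The plan is to choose $f\in\eo$ for which this limit fails. Points of $\Lambda$ are eigenvalues, so they are excluded from $\sep(D)$ by definition. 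Hence $\sep(D)=\kf-\Lambda$.

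\emph{The set $\sepp(D)$.} The kernel $N(\lambda_kI-D)$ is spanned by those $e_i$ with $\lambda_i=\lambda_k$, so its dimension equals the multiplicity $r_{\lambda_k}$. It is therefore infinite exactly when $\lambda_k\in\Lambda-\Lambda^{\ast}$. This upgrades the inclusion in the theorem to the equality $\sepp(D)=\Lambda-\Lambda^{\ast}$.

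\emph{Assembling $\se(D)$.} Combining with $\se(D)=\sep(D)\cup\sepp(D)$ gives $(\kf-\Lambda)\cup(\Lambda-\Lambda^{\ast})$. The final rewriting as $\partial\Lambda\cup(\Lambda\setminus\Lambda^{\ast})$ must be read with $\partial\Lambda$ interpreted as the set $\kf-\Lambda$ playing the role that $\overline{\Lambda}-\Lambda$ plays in the bounded corollary. This is a notational identification, not a topological fact.

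\emph{Main obstacle.} The hardest step is showing that $\lid$ is genuinely non-surjective for every $\lambda\notin\Lambda$, rather than merely lacking a bounded inverse. I would construct $f$ with $|f_i||w_i|^{1/2}\to 0$ slowly, while $|f_i||w_i|^{1/2}$ does not tend to $0$ after the factor $|\lambda_i|/|\lambda-\lambda_i|\approx 1$ is taken into account. If this fails, surjectivity may actually hold, and the identity $\sep(D)=\kf-\Lambda$ should then be read with "not surjective" replaced by "not boundedly invertible".
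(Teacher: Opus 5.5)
There is a genuine gap in your first step, and it is the same step the paper's own argument uses. You pass from boundedness of $(\lid)^{-1}$ to continuity of $\lid$, and hence of $D$. Nothing justifies this. The first theorem of the section only turns continuity of a map into a norm bound for that same map; it says nothing about the inverse, and unbounded operators routinely have bounded inverses. Here is the concrete failure. Let $\lambda\notin\overline\Lambda$ and put $\delta=\inf_i|\lambda-\lambda_i|>0$. The ultrametric inequality $|\lambda_i|\le\max\{|\lambda|,|\lambda-\lambda_i|\}$ gives
\[
\frac{|\lambda_i|}{|\lambda-\lambda_i|}\le\max\{1,|\lambda|/\delta\}.
\]
So for every $f\in\eo$ the vector $Rf=\sum_i(\lambda-\lambda_i)^{-1}f_ie_i$ lies in $\D(D)$ and satisfies $\|Rf\|\le\delta^{-1}\|f\|$. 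Moreover $R$ is a two-sided inverse of $\lid$. Hence $\kf\setminus\overline\Lambda\subseteq\rho(D)$ even when $D$ is unbounded. For example, take $\lambda_i=\pi^{-i}$ with $0<|\pi|<1$. Then $0\in\rho(D)$, and $D^{-1}=\mathrm{diag}(\pi^{i})$ has norm at most $1$. Thus in general $\sigma(D)=\overline\Lambda$, not $\kf$.

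The same computation settles your ``main obstacle'' in the negative. For $\lambda\notin\overline\Lambda$ the factor $|\lambda_i|/|\lambda-\lambda_i|$ is bounded; as you yourself observed, it equals $1$ on the tail where $|\lambda_i|>|\lambda|$. So the condition for the preimage to lie in $\D(D)$ reduces to $f\in\eo$, and no $f$ of the kind you want exists. Your fallback reading, ``not boundedly invertible'', fails too, because $R$ is bounded. Non-surjectivity occurs only for $\lambda\in\overline\Lambda\setminus\Lambda$: take $f$ supported on a subsequence with $|\lambda-\lambda_{i_k}|\to 0$. Therefore $\sep(D)=\overline\Lambda\setminus\Lambda$ and $\se(D)=(\overline\Lambda\setminus\Lambda)\cup(\Lambda\setminus\Lambda^{\ast})$, which is exactly the bounded-case formula. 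This agrees with the last line of the display under the paper's convention $\partial\Lambda=\overline\Lambda-\Lambda$. It contradicts the lines $\sigma(D)=\kf$ and $\sep(D)=\kf-\Lambda$ unless $\Lambda$ is dense in $\kf$. Your ``notational identification'' of $\partial\Lambda$ with $\kf-\Lambda$ therefore hides a real inconsistency rather than resolving it. Your arguments for $\spp(D)=\Lambda$ and for upgrading to $\sepp(D)=\Lambda-\Lambda^{\ast}$ via the kernel dimension are fine. The statement as a whole, however, cannot be proved as written, because those two lines are false for general unbounded diagonal $D$.
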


\begin{ex}
Let 
\[
\begin{aligned}
A: \eo &\rightarrow \eo \\
u = (u_{i}) & \mapsto v = (v_{i})
\end{aligned}
\]
be defined by $v_{0} = 0$ and $v_{n+1} = u_{n}$ for all $n \in \N$.

\begin{enumerate}
\item We show that $\lambda I - A$ is injective for all $\lambda \in 
\kf$. Clearly $A$ is injective. So if $\lambda I - A$ is not 
injective then $\lambda \ne 0$. Let 
$\displaystyle \mu = \sum_{i\in \N}\mu_{i}e_{i} \in \eo, \mu_{i}\in
\kf$ be an element of $N(\lambda I - A)$. Then 
\begin{equation}\lambda u_{n}
- v_{n} = 0\quad \forall n \in \N.
\end{equation}
 Since $v_{n+1} = u_{n}$, the above equation becomes 
$ \lambda v_{n+1}- v_{n} = 0\quad \forall n \in \N$ and 
$\lambda u_{0} = v_{0} = 0 \Rightarrow  u_{0} = 0$ since 
$\lambda \ne 0$. B induction, we claim that $u_{n} = 0$. Therefore 
$N(\lambda I - A) = 0 \Rightarrow  \lambda I -A$ is injective 
$\forall k \in \kf$. 

\item Note that $||A|| = 1$. by theorem 3, if $|\lambda| > 1$, 
$\lambda I - A$ is invertible $\Rightarrow  \sigma(A) \subseteq
X = \{\lambda \in \kf \text{ such that } |\lambda| \le 1\}$. 
Let $\displaystyle u = \sum_{i\in \N}u_{i}e_{i}\in \eo$ such that
$(\lambda I - A)(u) = e_{0}$. If $\lambda = 0 \Rightarrow 0 = 1$
is impossible. If $\lambda \ne 0$, $\lambda u_{0}= 1 \Rightarrow 
u_{0} = \lambda^{-1}$. Since we have $\lambda u_{n+1} - u_{n} = 0$
$\forall n \in \N$ which we obtained from the equation 
$\lambda u_{n+1} - v_{n+1} = 0$ $\forall n\in \N$. Then for $n=0$, 
$\lambda u_{1}- u_{0} = 0\Rightarrow  u_{1} = \lambda^{-1}u_{0}
=\lambda^{-1}\lambda^{-1} = \lambda^{-2}$. By induction, we obtain 
$u_{n} = \lambda^{-n-1} \Rightarrow \lim_{n\rightarrow \infty}
|w_{n}^{1/2}||\lambda^{-n-1}|=0$. 

If $w$ is an element such that $\lim_{n\rightarrow \infty}(w_{n})
= l \ne 0$, we have that, for $|\lambda|\le 1$,  
$|w^{1/2}||\lambda^{-n-1}|$ does not go to zero. Therefore there
does not exist an element $\displaystyle u = \sum_{i \in \N}u_{i}e_{i}
\in \eo$ such that $(\lambda I -A)(u) = e_{0}$ $\forall \lambda \in \kf,
\lambda \ne 0$ and $|\lambda| \le 1$.  

Hence for $\lambda \in \kf, \lambda \ne 0, |\lambda| \le 1$, 
$\lambda I -A$ is not surjective. Since $\lambda I -A$ is injective
for all $\lambda$, $\lambda I - A$ is not a Fredholm operator of index
zero for all $\lambda \in \kf, \lambda  \ne 0, |\lambda| \le 1$. 
\end{enumerate}
\end{ex}

\end{document}